\theoremstyle{plain}
\newtheorem{theorem}{Theorem}[section]
\newtheorem{lemma}[theorem]{Lemma}
\theoremstyle{definition}
\newtheorem{definition}[theorem]{Definition}
\newtheorem{remark}[theorem]{Remark}
\theoremstyle{remark}
\numberwithin{theorem}{section}
\numberwithin{equation}{section}
\numberwithin{figure}{section}
\def\R{\mathbb{R}}
\def\N{\mathbb{N}}
\def\1{{\bf 1}}
\def\e{\mathbf{e}}
\def\d{\mathrm{d}}
\def\Id{\mathrm{Id}}
\def\XXint#1#2#3{{\setbox0=\hbox{$#1{#2#3}{\int}$}
        \vcenter{\hbox{$#2#3$}}\kern-.5\wd0}}
\DeclareMathOperator{\conv}{conv}
\DeclareMathOperator{\diam}{diam}
\DeclareMathOperator{\dist}{dist}
\DeclareMathOperator{\dom}{dom}
\DeclareMathOperator{\interior}{int}
\DeclareMathOperator{\Span}{span}
\DeclareMathOperator{\spt}{spt}
\newcommand{\mres}{\mathop{\hbox{\vrule height 7pt width .5pt depth 0pt
\vrule height .5pt width 6pt depth 0pt}}\nolimits}
\begin{document}

\title[Regularity Properties of Monotone Measure-preserving Maps]{Regularity Properties of Monotone Measure-preserving Maps}

\author[A.\ Figalli]{Alessio Figalli}
\address{ETH Z\"{u}rich, Department of Mathematics, R\"{a}mistrasse 101, Z\"{u}rich 8092, Switzerland}
\email{alessio.figalli@math.ethz.ch}
\author[Y.\ Jhaveri]{Yash Jhaveri}
\address{Rutgers University\,--\,Newark, Department of Mathematics \& Computer Science, Smith Hall, 101 Warren Street, Newark, New Jersey 07102, USA}
\email{yash.jhaveri@rutgers.edu}

\begin{abstract}
In this note, we extend the regularity theory for monotone measure-preserving maps, also known as optimal transports for the quadratic cost optimal transport problem, to the case when the support of the target measure is an arbitrary convex domain and, on the low-regularity end, between domains carrying certain invariant measures.\\

\begin{center}
{\it In honor of David Jerison for his 70th birthday.}
\end{center}
\end{abstract}
\maketitle


\section{Introduction}

Given a pair of atom-free Borel probability measures $\mu$ and $\nu$ on $\R$, the monotone rearrangement theorem asserts that the function $y = y(x)$ defined implicitly by
\[
\int_{-\infty}^x \, \d \mu = \int_{-\infty}^{y(x)} \, \d \nu
\]
is measure-preserving, i.e.,
\[
\mu(y^{-1}(E)) = \nu(E)\quad \text{for any Borel set} \quad E \subset \R.
\]
In particular, $y$ is unique $\mu$ almost everywhere and can be made to be monotone, or, equivalently, the derivative of a convex function.

When $\R$ is replaced by $\R^n$, however, before Brenier's discovery (see \cite{B}), a proper generalization of the monotone rearrangement theorem was myth; and only after the work of McCann, in \cite{M}, was the myth made real.
Precisely, he proved that if $\mu$ vanishes on every Lipschitz $(n-1)$-dimensional surface\footnote{\,McCann actually assumes that $\mu$ vanishes on all (Borel) sets of Hausdorff dimension $n-1$.
This guarantees that the set of non-differentiability points of a convex function are $\mu$-negligible. 
However, this assumption can be weakened. 
Since convex functions are differentiable outside of a countable union of Lipschitz hypersurfaces (see \cite{Z}), McCann's theorem holds assuming that $\mu$ vanishes on every Lipschitz $(n-1)$-dimensional surface.}, then a convex potential $u : \R^n \to \R \cup \{ + \infty \}$ exists whose gradient map $\nabla u = \nabla u(x)$ is unique $\mu$ almost everywhere and pushes $\mu$ forward to $\nu$, i.e.,
\[
\mu((\nabla u) ^{-1}(E)) = \nu(E) \quad \text{for any Borel set} \quad E \subset \R^n.
\]
(Brenier's theorem guaranteed the same conclusion as McCann's theorem, but under some restrictive technical conditions on $\mu$ and $\nu$.)

The first general regularity result on these Brenier--McCann maps was proved by Caffarelli in \cite{C2}: provided that $\mu$ and $\nu$ are absolutely continuous with respect to $n$-dimensional Lebesgue measure $\mathscr{L}^n$, their respective densities $f$ and $g$ vanish outside of and are bounded away from zero and infinity on open bounded sets $X$ and $Y$ respectively, and $Y$ is convex, he showed that $u$ is strictly convex in $X$ (see, e.g., the proof of \cite[Theorem 4.6.2]{F}).
This result opened the door to the development of a regularity theory for mappings with convex potentials based on the regularity theory for strictly convex solutions to the Monge--Amp\`{e}re equation (see, e.g., \cite{C0}); indeed,
\[
(\nabla u)_{\#} f = g \text{ is formally, at least, equivalent to }  \det D^2 u = \frac{f}{g(\nabla u)}.
\]

Unfortunately, Caffarelli's boundedness assumptions on the domains $X$ and $Y$ are restrictive, since many probability densities, especially those found in applications, are supported on all of $\R^n$: Gaussian densities, for example.
Motivated by this, in \cite{FCE}, Cordero-Erausquin and Figalli showed that, in several situations of interest, one can ensure the regularity of monotone measure-preserving maps even if the measures under consideration have unbounded supports. 
However, missing from their collection is the situation where $Y$ is an {\it arbitrary} convex domain.
Lifting this restriction is a main goal of this paper.

\subsection{Results}
Our main theorem is an extension of Caffarelli's theorem, on the strict convexity of $u$, in two ways.
First, we allow $X$ and $Y$ to be unbounded.
Second, we permit $X$ and $Y$ to carrying certain invariant measures that we call {\it locally doubling measures} (qualitatively, our notion replaces balls in the classical notion of a doubling measure with ellipsoids, in order to account for the affine invariance of our setting).

\begin{definition}
A nonnegative measure $\lambda$ is {\it locally doubling (on ellipsoids)} if the following holds: for every ball $B$, there is a constant $C \geq 1$ such that  
\[
\lambda(\mathcal{E}) \leq C \lambda(\tfrac{1}{2}\mathcal{E})
\]
for all ellipsoids $\mathcal{E} \subset B$ with center (of mass) in $\spt(\lambda)$.
Here $\frac{1}{2}\mathcal{E}$ is the dilation of $\mathcal{E}$ with respect to its center by $1/2$.
\end{definition}
This notion of doubling was introduced by Jhaveri and Savin in \cite{JS}.\footnote{\,This family of measures is strictly larger than the family of measures locally comparable to Lebesgue measure on their supports. 
(See \cite{JS} for examples of locally doubling measures not comparable to Lebesgue measure on their supports.)}
That said, the first consideration of measures with a ``doubling like'' property in the world of solutions to Monge--Amp\`{e}re equations can be traced back to the work of Jerison \cite{J} and then Caffarelli \cite{C1}. 
In particular, in \cite{C1}, Caffarelli showed that Alexandrov solutions to
\[
\det D^2 v = \rho,
\]
where the measure $\rho$ is doubling on a specific collection of convex sets called sections\footnote{\,These are sets of the form $\{ v \leq \ell \}$ for any affine function $\ell$.}, share the same geometric properties as Alexandrov solutions to Monge--Amp\`{e}re equations with right-hand sides comparable to Lebesgue measure (see \cite{C0}).

We now state our main theorem.

\begin{theorem}
\label{thm: strict convexity}
Let $\mu$ and $\nu$ be two locally doubling probability measures on $\R^n$ that vanish on Lipschitz $(n-1)$-dimensional surfaces and are concentrated on two open sets $X$ and $Y$ respectively,
and suppose that $Y$ is convex.
Then any convex potential $u$ associated to the Brenier--McCann map pushing $\mu$ forward to $\nu$ is strictly convex in $X$.
\end{theorem}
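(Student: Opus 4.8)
The plan is to follow the architecture of Caffarelli's original strict-convexity proof, but to replace every invocation of "the density is bounded above and below" by the corresponding statement for locally doubling measures, using the machinery of Jhaveri–Savin. First I would recall the standard dichotomy: if $u$ fails to be strictly convex at some $x_0 \in X$, then the contact set $\Sigma := \{x : u(x) = \ell(x)\}$ between $u$ and a supporting affine function $\ell$ at $x_0$ is a convex set of dimension $k$ with $1 \le k \le n-1$, and $\partial u(\Sigma)$ is contained in a $k$-dimensional affine subspace (since $\nabla u$ is constant along the "flat" directions where a genuine segment lies in $\Sigma$). The goal is to derive a contradiction by showing this forces $\mu$ to charge a Lipschitz hypersurface, or forces an incompatibility between the masses $\mu$ and $\nu$ see on matching sets.

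**The extremal-point argument.**

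The heart of Caffarelli's argument is to look at an exposed point $z$ of $\Sigma$ (relative to its affine hull, or better, a point on the relative boundary of $\Sigma$ that is exposed), and to slide a supporting plane: consider sections $S_t = \{u < \ell + \text{(small linear perturbation)} + t\}$ localized near $z$. For bounded densities one shows these sections have a definite "opening" in the directions transverse to $\Sigma$ and uses the Monge–Ampère measure bound $\mathscr{L}^n(\nabla u(S_t)) \ge c\, \mathscr{L}^n(S_t)$ together with the fact that $\nabla u(S_t)$ degenerates (it is squeezed near the $k$-plane $\partial u(\Sigma)$) to get a contradiction via a volume estimate as $t \to 0$. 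In our setting I would instead use the balanced/normalized John ellipsoid of each section $S_t$ (which is a bounded section, hence after an affine normalization comparable to a ball), invoke the locally doubling property of $\mu$ to say $\mu(S_t) \ge c\, \mu(\tfrac12 S_t)$ and iterate, and on the target side note that $\nu(\nabla u(S_t)) = \mu(S_t)$ while $\nabla u(S_t)$ is trapped in an $O(\sqrt{t})$-neighborhood of a $k$-plane — then the locally doubling property of $\nu$ (together with the fact that $\nu$ vanishes on Lipschitz hypersurfaces, so it gives zero mass to the limiting $k \le n-1$ plane) forces $\nu(\nabla u(S_t)) \to 0$ faster than $\mu(S_t)$ can, contradicting equality.

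**Reductions and the key obstacle.**

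Two preliminary reductions make this run. (1) Unboundedness of $X$ and $Y$ is dealt with by localization: strict convexity is a local property, and a contact set through a point of $X$ is a nonempty closed convex set; if it is unbounded one truncates and runs the argument in a large ball where the locally doubling constants are fixed, so WLOG everything happens inside a fixed ball $B$ and the doubling constant $C = C(B)$ is available. (2) One may assume $u$ is globally finite and $\nabla u$ is a genuine transport (McCann's theorem, plus the fact that $\mu$ vanishes on Lipschitz hypersurfaces, handles a.e.\ differentiability).

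I expect the main obstacle to be the target-side estimate: controlling $\nu(\nabla u(S_t))$. In Caffarelli's proof the image $\nabla u(S_t)$ is a convex-ish set that one can directly compare to an ellipsoid, but here $\partial u$ of a section need not be convex, and the locally doubling hypothesis on $\nu$ is phrased for ellipsoids centered in $\spt\nu$, so one must first exhibit an ellipsoid $\mathcal{E}_t \supset \nabla u(S_t)$ (or a controlled family) that (a) is centered in $Y$, (b) has one semi-axis of order $\sqrt{t}$ collapsing toward the $k$-plane containing $\partial u(\Sigma)$, and (c) satisfies $\nu(\mathcal{E}_t) \ge c\,\mu(S_t)$. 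Combining the doubling inequality on $\mathcal{E}_t$ with the collapse of its thin direction — iterating $\nu(\mathcal{E}_t) \le C\,\nu(\tfrac12\mathcal{E}_t)$ roughly $\log(1/\sqrt t)$ times and using that $\bigcap_t \tfrac12^{j}\mathcal{E}_t$ limits onto a piece of a Lipschitz hypersurface of $\nu$-measure zero — yields $\nu(\mathcal{E}_t)\to 0$, hence $\mu(S_t)\to 0$; but a symmetric doubling iteration on the source side gives $\mu(S_t)\ge c\,\mathscr{L}^n$-normalized lower bound that does \emph{not} vanish, the contradiction. Making the two iteration rates quantitatively incompatible — essentially showing the target set is "thinner" in a doubling-robust sense than the source set — is the crux, and it is exactly where the Jhaveri–Savin estimates on sections of solutions to Monge–Ampère with doubling right-hand side are needed.
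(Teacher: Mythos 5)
Your high-level plan — isolate an exposed point of the contact set $\Sigma$, tilt to form sections $S_t$, normalize with the John map, and replace Caffarelli's density bounds by the doubling inequalities on ellipsoids — is indeed the right skeleton, and matches the paper's Case 2a. But the central mechanism you propose for the contradiction does not work, and you also skip two cases that the paper shows require separate, nontrivial arguments.

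The critical gap is the target-side estimate, which you yourself flag as "the crux." Your plan is to exhibit a thin ellipsoid $\mathcal{E}_t \supset \nabla u(S_t)$ with one semi-axis of size $\sqrt{t}$, then "iterate $\nu(\mathcal{E}_t)\le C\,\nu(\tfrac12\mathcal{E}_t)$ roughly $\log(1/\sqrt t)$ times" to conclude $\nu(\mathcal{E}_t)\to 0$. This is logically backwards: the doubling inequality $\lambda(\mathcal{E})\le C\lambda(\tfrac12\mathcal{E})$ is a \emph{lower} bound on the mass of the shrunken ellipsoid, not an upper bound on the mass of the original one, so iterating it cannot force anything to zero. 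Moreover, even granting $\nu(\mathcal{E}_t)\to 0$ (which follows by plain continuity of measure, no doubling needed, since $\mathcal{E}_t$ collapses onto a null set), this produces no contradiction: by the same token $\mu(S_t)\to 0$ as $S_t$ shrinks onto the lower-dimensional set $\Sigma\cap\{x_1\ge -1\}$, so both sides vanish. What is needed is a quantitative incompatibility between the two rates, and that is exactly what your sketch does not supply. The paper's mechanism is quite different: after normalization, the subdifferential $\partial\tilde\phi_\epsilon(\tilde S_\epsilon)$ contains a \emph{longer and longer} cone $K_\epsilon = \conv(B_{r_n}\cup\{(r_n/a_\epsilon)\e_1\})$ as $\epsilon\to 0$ (because the exposed point sits within $O(a_\epsilon)$ of $\partial\tilde S_\epsilon$), while the mass-balance formula plus source-side doubling pins $\tilde\gamma_\epsilon(K_\epsilon)$ below a fixed multiple of $\tilde\gamma_\epsilon(B_{r_n})$. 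The contradiction then comes from a counting trick: one chooses $M\sim\log(1/a_\epsilon)$ nested cones $K_m$ with $\tfrac12 K_m\subset K_m\setminus K_{m-1}$, so that target-side doubling gives $M\,\tilde\gamma_\epsilon(B_{r_n})\le C_\gamma\,\tilde\gamma_\epsilon(K_\epsilon)$, forcing $M$ to be bounded while $M\to\infty$. This "partition the long cone into doubling-comparable slabs" argument is precisely the replacement for Caffarelli's volume estimate, and it is the idea your proposal is missing.

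Two further gaps. First, your preliminary reduction that "one may assume $u$ is globally finite" cannot be made: when the measures have unbounded supports, the exposed point of $\Sigma$ may lie on $\partial(\dom u)$, where $u=+\infty$ just outside. This is the paper's Case 2b (explicitly labeled "completely novel"), where the subdifferential image contains a half-infinite cone and one must separately prove $\|\epsilon L_\epsilon\|$ stays bounded; you cannot reduce it away. Second, the exposed point can lie in $\R^n\setminus\overline X$ (the paper's Case 3), which needs its own short argument via the mass-balance identity $0=\mu(S_\epsilon)=\nu_\epsilon(\partial u_\epsilon(S_\epsilon))$ against the fact that $\partial u_\epsilon(S_\epsilon)$ contains a small ball inside $\spt\nu_\epsilon$; your sketch does not address it. Finally, your opening claim that "$\partial u(\Sigma)$ is contained in a $k$-dimensional affine subspace" when $\dim\Sigma=k$ is false in general (take $\Sigma$ a triangle): what is true is that if $\Sigma$ contains a full line in direction $\e$, then $\partial u(\R^n)\subset\e^\perp$, which is the paper's Case 1 and handles only the "no exposed point" alternative.
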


\begin{remark}
It is well-known that $Y$ needs to be convex.
When $Y$ is not convex, $u$ can fail to be strictly convex and $\nabla u$ can behave rather poorly.
If we consider Pogorelov's counterexample to the strict convexity of solutions to the Monge--Amp\`{e}re equation in three dimensions, we see that $Y$ needs to be convex in order to guarantee the strict convexity of potentials of Brenier--McCann maps.
In particular, let $u(x',x_3) = |x'|^{4/3}(1+x_3^2)$, let $Q_r = \{ |x_i| < r/2 \text{ for } i = 1,2,3 \}$ be the cube with side length $r > 0$ and centered at the origin in $\R^3$, and let $Y = \nabla u(Q_r)$.
If $r \ll 1$, then $f = \det D^2 u$ is analytic and positive in $Q_r$, and $\nabla u$ is the Brenier--McCann map pushing forward $\mu = (f/\|f\|_{L^1(Q_r)}) \mathscr{L}^3 \mres Q_r$ to $\nu = (1/\mathscr{L}^3(Y)) \mathscr{L}^3 \mres Y$.
The set $Y$ is open but not convex, and $u = 0$ along $\{ x' = 0 \}$.
Moreover, as demonstrated in, for instance, \cite{C2} and \cite{Jh}, $\nabla u$ easily fails to be continuous when $Y$ is not convex.
\end{remark}

\begin{remark}
While the target measure $\nu$ need not vanishes on all Lipschitz $(n - 1)$-dimensional surfaces in order to invoke McCann's theorem (which asks this only of the source measure $\mu$), it must in order to ensure our main theorem holds.
If $\mu = \mathscr{L}^2 \mres Q_1$ is the uniform measure on $Q_1$ the unit cube centered at the origin in $\R^2$ and $\nu = \mathscr{H}^1 \mres Q_1 \cap \{ x_2 = 0 \}$ is the $1$-dimensional Hausdorff measure restricted to the central horizontal axis of $Q_1$, then the Brenier--McCann map pushing $\mu$ forward to $\nu$ is the projection map $(x_1,x_2) \mapsto x_1$.
Up to a constant, this map's convex potential is $\frac{1}{2}x_1^2$, which is not strictly convex.
With respect our proof of Theorem~\ref{thm: strict convexity}, asking this of both $\mu$ and $\nu$ guarantees the validity of the mass balance formula in Lemma~\ref{lem: Brenier soln} (see also Remark~\ref{rmk:balance}), our main tool.
\end{remark}

\begin{remark}
A simple case to which Theorem~\ref{thm: strict convexity} applies, but the corresponding results in \cite{C2} and \cite{FCE} do not, is when $\mu = g \mathscr{L}^3 \mres \{|x_1| < 1 \} \times \R^2$ and $\nu = g \mathscr{L}^3 \mres \R^2 \times \{|x_3| < 1 \}$, and $g$ is the standard Gaussian density on $\R^3$ appropriately normalized to make $\mu$ and $\nu$ probability measures.
\end{remark}

With our main theorem in hand, our second and third theorems further extend the known regularity theory for monotone measure-preserving maps, completing the story started by Cordero-Erausquin and Figalli in \cite{FCE} on monotone transports between unbounded domains.

\begin{theorem}
\label{thm: C1}
Let $\mu$ and $\nu$ be two locally doubling probability measures on $\R^n$ that vanish on Lipschitz $(n-1)$-dimensional surfaces and are concentrated on two open sets $X$ and $Y$ respectively, and suppose that $Y$ is convex.
Then the Brenier--McCann map $\nabla u$ pushing $\mu$ forward to $\nu$ is a homeomorphism from $X$ onto a full measure subset of $Y$.
Moreover, for every $A \Subset X$, a constant $\alpha > 0$ exists such that $\nabla u \in C^{0,\alpha}(A)$.
Furthermore, $\nabla u(X) = Y$ whenever $X$ is convex.
\end{theorem}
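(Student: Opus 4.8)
The plan is to deduce the theorem from Theorem~\ref{thm: strict convexity} together with the interior regularity theory for strictly convex solutions with locally doubling data developed in \cite{JS} (the affine-invariant form of the theory of \cite{C1}). By Theorem~\ref{thm: strict convexity}, $u$ is strictly convex in $X$, and I would first record the standard reformulation of this: for every $x_0 \in X$ and every $p \in \partial u(x_0)$, writing $S_h(x_0) := \{x : u(x) < u(x_0) + \langle p, x - x_0 \rangle + h\}$, there is $h_0 = h_0(x_0) > 0$ so that each $S_h(x_0)$ with $0 < h \le h_0$ is a bounded convex set with $\overline{S_h(x_0)} \subset X$, and $\bigcap_{h > 0} \overline{S_h(x_0)} = \{x_0\}$.

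Next I would establish the $C^1$ and Hölder statements. Fix $x_0 \in X$ and a section $S := S_{h_0}(x_0) \Subset X$. The restriction of $\nabla u$ to $S$ is the Brenier--McCann map pushing $\mu \mres S$ forward to $\nu$ restricted to the image, both measures remain locally doubling, and, using the mass-balance formula of Lemma~\ref{lem: Brenier soln} to compare the $\mu$-mass of subsections of $S$ with the $\nu$-mass of their images, one checks that $u|_S$ satisfies the hypotheses of the locally-doubling regularity theory of \cite{JS}. This yields $u \in C^1(S)$ together with an exponent $\alpha_{x_0} \in (0,1)$ such that $\nabla u \in C^{0,\alpha_{x_0}}(S_{h_0/2}(x_0))$. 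Letting $x_0$ range over $X$ gives $u \in C^1(X)$; and given $A \Subset X$, covering $\overline A$ by finitely many half-sections $S_{h_i/2}(x_i)$, on each of which $\nabla u$ is continuous, bounded and $\alpha_{x_i}$-Hölder, yields $\nabla u \in C^{0,\alpha}(A)$ with $\alpha := \min_i \alpha_{x_i}$. I expect this reduction---verifying that, once Theorem~\ref{thm: strict convexity} supplies strict convexity and hence good sections, the mass-balance formula genuinely places us in the setting of \cite{JS} section by section---to be the main point; the remaining arguments are soft.

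Strict convexity also makes $\nabla u$ injective on $X$: if $p \in \partial u(x_1) \cap \partial u(x_2)$ for distinct $x_1, x_2 \in X$, then $u$ agrees with the affine function $u(x_1) + \langle p, \cdot - x_1 \rangle$ along $[x_1, x_2]$, contradicting strict convexity at $x_1$. Being a continuous injection on the open set $X$, $\nabla u$ is then, by invariance of domain, a homeomorphism onto the open set $\nabla u(X)$. Since $(\nabla u)_\# \mu = \nu$ is concentrated on $Y$, one has $\nabla u(x) \in Y$ for $\mu$-a.e.\ $x$, whence $\nabla u(X) \subset \overline Y$; as $\nabla u(X)$ is open and $Y$ is convex and open, $\nabla u(X) \subset \interior(\overline Y) = Y$. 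Finally, since $\nabla u$ is continuous on $X$ and $\mu$ is concentrated on $X$, the pushforward $(\nabla u)_\# \mu = \nu$ is concentrated on $\nabla u(X)$, so $\nu(Y \setminus \nabla u(X)) = 0$.

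For the last assertion I would pass to the reverse transport, whose Brenier--McCann map is $\nabla u^*$, the gradient of the Legendre transform of $u$: it pushes $\nu$ forward to $\mu$, both measures are locally doubling and vanish on Lipschitz $(n-1)$-dimensional surfaces, they are concentrated on $Y$ and $X$ respectively, and now---by hypothesis---the target $X$ is convex. Applying to $\nabla u^*$ the part of the theorem already proved gives $u^* \in C^1(Y)$ and $\nabla u^*$ a homeomorphism of $Y$ onto a subset of $X$. Then, for any $p \in Y$, setting $x := \nabla u^*(p) \in X$ we have $x \in \partial u^*(p)$, hence $p \in \partial u(x) = \{\nabla u(x)\}$ (using $u \in C^1(X)$), so $p \in \nabla u(X)$; as $p \in Y$ was arbitrary, $Y \subset \nabla u(X)$, which together with the inclusion $\nabla u(X) \subset Y$ from the previous paragraph gives $\nabla u(X) = Y$. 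Note that convexity of $Y$ is used only through Theorem~\ref{thm: strict convexity}, and convexity of $X$ only in this last paragraph.
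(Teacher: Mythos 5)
Your ``soft'' steps all agree with the paper: strict convexity gives injectivity of $\nabla u$, invariance of domain then gives a homeomorphism onto an open image, the pushforward relation gives full $\nu$-measure, and passing to the reverse transport (whose map is $\nabla u^\ast$) to obtain $\nabla u(X)=Y$ when $X$ is convex is exactly the paper's Part~2. The genuine gap is in the $C^1$/H\"older step, which you yourself flag as ``the main point'' and then do not carry out. You propose to fix a section $S\Subset X$, restrict $\mu$ to $S$ and $\nu$ to the image, and ``check that the hypotheses of the locally-doubling regularity theory of \cite{JS}'' hold. But the main theorem of \cite{JS} requires \emph{both} the source and the target of the transport to be bounded convex sets: $S$ is convex, but $\partial u(S)$ in general is not, so the hypotheses fail as stated and the reduction is not immediate. (Trying to repair this by enlarging the target to its convex hull $V$ and taking $\Omega:=\partial u^\ast(V)$ as the new source trades the problem: now the source need not be convex.)

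The paper handles this differently, and the order of dependencies matters. It first proves $C^1$ regularity (Part~1) by a direct, self-contained argument parallel to \cite[Corollary 1]{C0}: assume $\partial u$ is multivalued at some interior point, normalize so that $u\geq\max\{x_1,0\}$ with $u(-s\e_1)/s\to 0$, form the sliding-plane sections $S_\sigma=\{u-\tau(x_1+2\sigma)\leq 0\}$, use Theorem~\ref{thm: strict convexity} to guarantee $S_\sigma\Subset X\cap\interior(\dom u)$ for small $\sigma$, and run the mass-balance plus John-normalization machinery already developed in the proof of Theorem~\ref{thm: strict convexity} to produce a contradiction as $\tau\to0$. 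No appeal to \cite{JS} is made here. Only \emph{after} $C^1$ is in hand (which is equivalent to strict convexity of $u^\ast$, and is needed so that the sections of $u^\ast$ appearing in the Alexandrov-maximum-principle/engulfing argument of \cite[Lemmas 3.2--3.3]{JS} are well-behaved) does the paper localize \cite{JS}'s interior argument to get local H\"older continuity (Part~3). So if you want to cite \cite{JS} for the quantitative H\"older bound, you still need to establish $C^1$ separately first, and that step is a real argument, not a reduction.
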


\begin{theorem}
\label{thm: higher reg}
Let $f$ and $g$ be two functions on $\R^n$ that define locally doubling probability measures concentrated on two open sets $X$ and $Y$ respectively, and suppose that $Y$ is convex.
Assume that $f$ and $g$ are bounded away from zero and infinity on compact subsets of $X$ and $Y$ respectively. 
Then for every $E \Subset X$, a constant $\epsilon > 0$ exists such that any convex potential $u$ associated to the Brenier--McCann map pushing $f$ forward to $g$ is $W^{2,1+\epsilon}(E)$.
Also, $\nabla u$ is locally a $C^{k+1,\beta}$-diffeomorphism from $X$ onto its image provided $f$ and $g$ are locally $C^{k,\beta}$ in $X$ and $Y$ respectively.
\end{theorem}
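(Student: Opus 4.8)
The plan is to derive Theorem~\ref{thm: higher reg} from Theorems~\ref{thm: strict convexity} and~\ref{thm: C1} together with the classical interior regularity theory for the Monge--Amp\`ere equation. The mechanism is localization: I will pass to sections of $u$ that are compactly contained in $X$ and whose images under $\nabla u$ are compactly contained in $Y$, where the Monge--Amp\`ere equation satisfied by $u$ has right-hand side bounded between two positive constants.

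Fix $E \Subset X$ and $x_0 \in E$. By Theorem~\ref{thm: C1} the map $\nabla u$ is continuous on $X$ and sends $X$ into $Y$, so it carries compact subsets of $X$ to compact subsets of $Y$ (which therefore lie at positive distance from $\partial Y$). Choose $r > 0$ with $\overline{B_r(x_0)} \subset X$. Since $u$ is strictly convex in $X$ by Theorem~\ref{thm: strict convexity}, there is an $h > 0$ for which the section
\[
S := \bigl\{ x \in \R^n : u(x) < u(x_0) + \nabla u(x_0) \cdot (x - x_0) + h \bigr\}
\]
has $\overline{S} \subset B_r(x_0)$; then $K := \nabla u(\overline{S})$ is a compact subset of $Y$. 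On $\overline{S}$ the density $f$ is bounded away from $0$ and $\infty$, and on $K$ so is $g$, so by the mass balance formula of Lemma~\ref{lem: Brenier soln} the restriction $u\vert_S$ is a strictly convex Alexandrov solution of the Monge--Amp\`ere equation $\det D^2 u = f / (g \circ \nabla u)$ whose right-hand side is trapped between two positive constants. As $x_0$ lies in the interior of the half-height section $S' := \{ x : u(x) < u(x_0) + \nabla u(x_0) \cdot (x - x_0) + h/2 \} \Subset S$, it suffices to prove all of the asserted regularity on $S'$.

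Next I would run the standard interior theory on $u\vert_S$, after the usual affine normalization of sections. For the first conclusion, the interior $W^{2,1+\epsilon}$ estimates for the Monge--Amp\`ere equation (De Philippis--Figalli, and De Philippis--Figalli--Savin / Schmidt) give an exponent $\epsilon(x_0) > 0$ with $u \in W^{2,1+\epsilon(x_0)}(S')$; since the interiors of the sections $S'$ form an open cover of the compact set $E$, passing to a finite subcover and taking $\epsilon$ to be the minimum of the finitely many exponents yields $u \in W^{2,1+\epsilon}(E)$. For the second conclusion, assume in addition $f \in C^{k,\beta}_{\mathrm{loc}}(X)$ and $g \in C^{k,\beta}_{\mathrm{loc}}(Y)$. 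Since $\nabla u \in C^{0,\alpha}_{\mathrm{loc}}(X)$ by Theorem~\ref{thm: C1}, the right-hand side $x \mapsto f(x) / g(\nabla u(x))$ is H\"older continuous on $S$ and bounded between positive constants, so Caffarelli's interior $C^{2,\gamma}$ estimate for strictly convex Alexandrov solutions (see \cite{C0}) gives $u \in C^{2,\gamma}_{\mathrm{loc}}(X)$ for some $\gamma > 0$; in particular $\nabla u$ is locally Lipschitz, so $x \mapsto f(x)/g(\nabla u(x)) \in C^{0,\beta}_{\mathrm{loc}}(X)$, and a second application upgrades this to $u \in C^{2,\beta}_{\mathrm{loc}}(X)$. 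When $k \geq 1$, the bound $D^2 u \geq c\,\Id > 0$ makes the equation $\log \det D^2 u = \log\bigl(f/g(\nabla u)\bigr)$ uniformly elliptic, and the usual Schauder bootstrap applies: $u \in C^{j+2,\beta}_{\mathrm{loc}}(X)$ forces $\nabla u \in C^{j+1,\beta}_{\mathrm{loc}}(X)$, hence $f/(g \circ \nabla u) \in C^{\min(j+1,k),\beta}_{\mathrm{loc}}(X)$, hence $u \in C^{\min(j+1,k)+2,\beta}_{\mathrm{loc}}(X)$, and this terminates at $u \in C^{k+2,\beta}_{\mathrm{loc}}(X)$. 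Thus $\nabla u \in C^{k+1,\beta}_{\mathrm{loc}}(X)$ with everywhere invertible derivative $D^2 u$, and the inverse function theorem presents $\nabla u$ as a local $C^{k+1,\beta}$-diffeomorphism of $X$ onto its image in $Y$.

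The one genuinely delicate step is the localization in the second paragraph, and it is exactly there that the hard inputs are used: Theorem~\ref{thm: strict convexity} ensures that sections centered at points of $E$ are compactly contained in $X$ and nested, while Theorem~\ref{thm: C1} ensures that $\nabla u$ maps each such section into a compact subset of $Y$, so that $g$ — and hence the right-hand side of the Monge--Amp\`ere equation — remains bounded between positive constants there. Once this geometric picture is secured, the remaining arguments are the standard interior regularity theory for the Monge--Amp\`ere equation applied on normalized sections.
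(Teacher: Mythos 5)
Your proposal is correct and follows exactly the route the paper indicates: use Theorem~\ref{thm: strict convexity} and Theorem~\ref{thm: C1} to produce sections compactly contained in $X$ whose images under $\nabla u$ lie compactly in $Y$, and then invoke the classical interior Monge--Amp\`{e}re theory (interior $W^{2,1+\epsilon}$ estimates and the Caffarelli/Schauder bootstrap) on those normalized sections. The paper treats this step as standard and simply refers to \cite[Section 4.6.1]{F} rather than spelling it out, so your argument is a fleshed-out version of the same localization.
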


\begin{remark}
We note that the proof of the Theorem~\ref{thm: higher reg}, given the strict convexity of $u$ (provided by Theorem~\ref{thm: strict convexity}), is classical.
Indeed, it suffices to localize  classical regularity results for the Monge--Amp\`ere equation.
We refer the reader to \cite[Section 4.6.1]{F} for more details.
\end{remark}

\subsection{Structure}
This remainder of this paper is structured as follows.

In Section~\ref{sec: strict convexity}, we prove Theorem~\ref{thm: strict convexity}.
Our proof is self-contained apart from some facts in convex analysis; we provide explicit references to these used but unproved facts.
We remark that our proof is inspired by the proof of the Alexandrov maximum principle in \cite{JS} (and, of course, Caffarelli's original proof of the strict convexity of potential functions of optimal transports/solutions to Monge--Amp\`{e}re equations).
If the reader is familiar with \cite{FCE} or \cite{C2}, then they might consider directing their attention to Case 2.
Case 2b is completely novel. 
Case 2a illustrates our argument in the setting of \cite{C2}, which builds on the work of \cite{C0} and is the foundation for Case 2b.

Section~\ref{sec: C1} is dedicated to the proof of Theorem~\ref{thm: C1}.
Our proof here is similarly self-contained (and an adaptation of Caffarelli's argument of the same result in \cite{C0}, but, of course, using the line of reasoning developed to prove Theorem~\ref{thm: strict convexity}).
The H\"{o}lder regularity of $\nabla u$ is a consequence of appropriately localizing the arguments of \cite{JS}.

\section{Proof of Theorem~\ref{thm: strict convexity}}
\label{sec: strict convexity}

Before we begin, it will be convenient to replace the potential $u$ by the following lower-semicontinuous extension of $u$ outside of $X$:\footnote{\,Here $\partial u(z)$ is called the subdifferential of $u$ at $z$ and is defined as follows:
\[
\partial u(z) := \{ p \in \R^n : u(x) \geq u(z) + p \cdot (x-z) \text{ for all $x \in X$} \}.
\]
Moreover, for a set $E \subset \R^n$, we define $\partial u(E) := \cup_{z \in E} \partial u(z)$.
}
\[
\underline{u}(x) := \sup_{\substack{z \in X \\ p \in \partial u(z)}} \{ u(z) + p \cdot (x-z) \}.
\]
Observe that $\underline{u}|_X = u|_X$.
For notational simplicity, we shall not distinguish $\underline{u}$ from $u$; so when we write $u$ in what follows, we mean $\underline{u}$.

We shall denote the domain of $u$ by $\dom(u)$, namely, $\dom(u) := \{u<+\infty\}$.
Note that $\dom(u)$ is convex.
We recall that convex functions are locally Lipschitz inside their domain (see, e.g., \cite[Appendix A.4]{F}).
Furthermore, we shall denote the convex hull of a set $A$ by $\conv(A)$.

Let $\ell$ define a supporting plane to the graph of $u$ at a point in $X$.
Precisely,
\[
\ell(x)=u(z) + p \cdot (x-z)\quad \text{for some}\quad (z,p) \in X \times \R^n
\]
and $\ell \leq u$.
Note that 
\[
\Sigma := \{ u = \ell \} = \{ u \leq \ell \}
\]
is closed, as $u$ is lower-semicontinuous, 
\[
\Sigma, X \subset \dom(u),
\]
and, because $Y$ is convex,
\begin{equation}
\label{eqn: subdiff props}
\partial u(\R^n) \subset \overline{\partial u(X)} \subset \overline{Y}\quad \text{and}\quad \mathscr{L}^n(Y \setminus \partial u(X)) = 0. 
\end{equation}
(A proof of \eqref{eqn: subdiff props} can be found in \cite{FCE}.)

Recall that an exposed point $\hat{x}$ of $\Sigma\subset \R^n$ is one for which there exists a hyperplane $\Pi\subset \R^n$ tangent to $\Sigma$ at $\hat{x}$ such that $\Pi\cap \Sigma=\{\hat{x}\}$. 
Also, remember that optimal/monotone transports balance mass, in the following way.

\begin{lemma}[Mass Balance Formula]
\label{lem: Brenier soln}
Let $u : \R^n \to \R \cup \{ + \infty \}$ be convex and such that $(\nabla u)_\# \mu = \nu$ where $\mu$ and $\nu$ are two Borel measures that vanish on all Lipschitz $(n-1)$-dimensional surfaces.
Then for all Borel sets $E \subset \R^n$,
\[
\mu(E)  = \nu (\partial u(E) ).
\]
\end{lemma}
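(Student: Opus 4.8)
The plan is to reduce the formula to the elementary observation that, off suitable negligible sets, $\nabla u$ is defined, injective, and carries $\mu$ onto $\nu$. First I would set up the duality. Write $D$ for the set of points at which $u$ is finite and differentiable, let $u^{*}$ be the Legendre transform of $u$, and let $D^{*}$ be its differentiability set. By the structure theorem for convex functions recalled in the introduction (differentiability off a countable union of Lipschitz $(n-1)$-surfaces), and since $\mu$ and $\nu$ vanish on such surfaces, $\mu(\R^{n}\setminus D)=0$ and $\nu(\R^{n}\setminus D^{*})=0$; on $D$ one has $\partial u(x)=\{\nabla u(x)\}$, and standard convex duality gives $p\in\partial u(x)\iff x\in\partial u^{*}(p)$. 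From $(\nabla u)_{\#}\mu=\nu$ it also follows that $\nu$ is concentrated on $R:=\nabla u(D)$, since the $\nabla u$-preimage of $\R^{n}\setminus R$ is empty. I then introduce
\[
D':=\{x\in D:\ \nabla u(x)\in D^{*}\cap R\}.
\]
(The sets $R$, $D'$, and the images $\nabla u(A)$ below are Suslin, hence universally measurable, so applying $\mu$ or $\nu$ to them is legitimate.)

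Next I would record three facts. \emph{(i)} $\mu(\R^{n}\setminus D')=0$: up to the $\mu$-null set $\R^{n}\setminus D$, the complement of $D'$ is the $\nabla u$-preimage of $(D^{*}\cap R)^{c}$, which is $\nu$-null, hence $\mu$-null by $(\nabla u)_{\#}\mu=\nu$. \emph{(ii)} $\nabla u$ is injective on $D'$: if $\nabla u(x)=\nabla u(x')=:p\in D^{*}$, then $x,x'\in\partial u^{*}(p)$, a singleton because $p\in D^{*}$, so $x=x'$. \emph{(iii)} For $p\in D^{*}\cap R$ the point $\nabla u^{*}(p)$ lies in $D'$ with $\nabla u(\nabla u^{*}(p))=p$: writing $p=\nabla u(x_{0})$ with $x_{0}\in D$ gives $x_{0}\in\partial u^{*}(p)=\{\nabla u^{*}(p)\}$, hence $\nabla u^{*}(p)=x_{0}\in D$ and $\nabla u(x_{0})=p\in D^{*}\cap R$.

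With these in hand, the core identity is that for every Borel $A\subset D'$ one has $\mu(A)=\nu(\nabla u(A))$: indeed $(\nabla u)^{-1}(\nabla u(A))$ agrees with $A$ outside $\R^{n}\setminus D'$ (which is $\mu$-null by (i)) and outside $(\nabla u)^{-1}(\nabla u(A))\cap D'\setminus A$ (which is empty by (ii)), so $\nu(\nabla u(A))=\mu((\nabla u)^{-1}(\nabla u(A)))=\mu(A)$. Finally, given a Borel set $E$, I put $A:=E\cap D'$, so $\mu(E)=\mu(A)$ by (i). Each $x\in A\subset E\cap D$ has $\nabla u(x)\in\partial u(x)\subset\partial u(E)$, so $\nabla u(A)\subset\partial u(E)$; conversely, if $p\in\partial u(E)\cap D^{*}\cap R$ with $p\in\partial u(x)$, $x\in E$, then $x\in\partial u^{*}(p)=\{\nabla u^{*}(p)\}$, so $x=\nabla u^{*}(p)\in D'$ and $\nabla u(x)=p$ by (iii), whence $x\in A$ and $p\in\nabla u(A)$. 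Thus
\[
\partial u(E)\cap D^{*}\cap R\ \subset\ \nabla u(A)\ \subset\ \partial u(E),
\]
and since $\nu$ is concentrated on $D^{*}\cap R$, applying $\nu$ and the core identity gives $\nu(\partial u(E))=\nu(\nabla u(A))=\mu(A)=\mu(E)$.

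The main obstacle — really the only place where care is needed — is the bookkeeping of the negligible sets, and in particular the fact that the hypothesis must be used for \emph{both} measures: on $\mu$ to have $\nabla u$ defined $\mu$-a.e.\ and injective modulo $\mu$, and on $\nu$ both to discard the part of $\partial u(E)$ lying outside $D^{*}$ and to secure fact (iii) via $R$. Dropping the hypothesis from $\nu$ breaks the argument, as the projection example in the remarks above shows. A secondary, purely technical point is the universal measurability of $R$ and of the images $\nabla u(A)$, which holds because these sets are analytic.
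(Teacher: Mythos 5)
Your proof is correct. The paper does not give a self-contained argument; Remark~\ref{rmk:balance} defers to \cite[Lemma 4.6]{V} and simply observes that absolute continuity is only used there to make the non-differentiability sets of $u$ and $u^\ast$ negligible. Your write-up reconstructs exactly this standard argument in full detail: you identify the sets $D$, $D^\ast$, $R$ and the set $D'$ on which $\nabla u$ is a measure-preserving bijection up to $\mu$- and $\nu$-null sets, squeeze $\nabla u(E\cap D')$ between $\partial u(E)\cap D^\ast\cap R$ and $\partial u(E)$, and handle the measurability of the image sets via their analyticity. One minor point to flag: the claim $\mu(\R^n\setminus D)=0$ is best viewed as built into the hypothesis $(\nabla u)_\#\mu=\nu$ (which requires $\nabla u$ to be defined $\mu$-a.e.) rather than a consequence of Zaj\'{\i}\v{c}ek's theorem alone, since $u$ may take the value $+\infty$ on a set of positive Lebesgue measure; for $u^\ast$ this issue does not arise because $\nu$ is concentrated on $R\subset\dom(u^\ast)$, precisely as you use. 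Neither point changes the conclusion, and the route taken is the one the paper has in mind.
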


\begin{remark}
\label{rmk:balance}
The mass balance formula was originally proved for measures that are absolutely continuous with respect to Lebesgue measure (see, e.g., \cite[Lemma 4.6]{V}).
However, with respect to absolute continuity, the proof only relies on the measures in question not giving mass to the set of non-differentiable points of a convex function.
As observed in the introduction, such points are contained in a countable union of Lipschitz $(n-1)$-dimensional surfaces. 
So the set of non-differentiable points of a convex function is negligible both for $\mu$ and $\nu$ under our assumption.
\end{remark}

Finally, recall that if a nonnegative measure is locally doubling (on ellipsoids), then it is locally doubling on all bounded convex domains (see \cite[Corollary 2.5]{JS}).
After this preliminary discussion, we can now prove our main theorem.

\begin{proof}[Proof of Theorem~\ref{thm: strict convexity}]
Proving that $u$ is strictly convex in $X$ corresponds to proving that for any supporting plane $\ell$ to (the graph of) $u$ at a point in $X$, the set $\Sigma=\{u=\ell\}$ is a singleton.
Assuming that $\Sigma$ is not a singleton, we will show that $\Sigma$ both has and does not have exposed points, which cannot be; thus, $\Sigma$ is a singleton, as desired.

\subsection*{Case 1}{\bf $\Sigma$ has no exposed points.} If $\Sigma$ has no exposed points, then $\Sigma \supset \R\e$ for some unit vector $\e$.
In turn, $\partial u(\R^n) \subset \e^\perp$  (see, e.g., \cite[Lemma A.25]{F}).
But this is impossible given \eqref{eqn: subdiff props}:
\[
0 < \mathscr{L}^n(\partial u(X) \cap Y) \leq \mathscr{L}^n(\partial u(\R^n) \cap Y) \leq \mathscr{L}^n(\e^\perp \cap Y) = 0.
\]

\subsection*{Case 2}{\bf $\Sigma$ has an exposed point $\hat{x}$ in $\overline{X}$.}
Up to a translation and a rotation, we can assume that
\[
\hat{x} = 0 \in \overline{X},\quad \Sigma \subset \{ x_1 \leq 0 \},\quad \text{and}\quad  \Sigma \cap \{ x_1 = 0 \} = \{ 0 \}.
\]
Since $X$ is open and $\Sigma\cap X$ is nonempty by construction, there is a point $x_{int}\in \Sigma \cap X$ and a ball centered at this point completely contained in $X$.
Thus, up to a shearing transformation $x \mapsto x - \eta x_1$ with $\eta \cdot \e_1 = 0$, and a dilation, we may assume that
\[
x_{int} = - \e_1 \quad \text{and}\quad
B_d(- \e_1 ) \Subset X
\]
for some $d > 0$.
Finally, up to subtracting $\ell$ from $u$, we can assume that 
\[
\ell \equiv 0.
\]

\subsection*{Case 2a: $0 \in \interior(\dom(u)) \cap \overline{X}$.} 
As our exposed point $0$ and all of the points in $\overline{B_d(-\e_1)}$ belong to $\interior(\dom(u))$, which is convex (and, by definition, open), the convex hull of the union of $\overline{B_d(-\e_1)}$ and $\{0\}$ is contained in $\interior(\dom(u))$.
So there exists an open, bounded set $U \Subset \interior (\dom (u))$ containing $\conv (\overline{B_d(-\e_1)} \cup \{0\} )$.
Moreover, we know that 
\[
\partial u(U) \subset \conv(\overline{\nabla u(U)}) =: \Upsilon  \subset B_R \cap \overline{Y}
\]
for some $R > 0$ (see, e.g., \cite[Lemma A.22]{F}).
Let $u^\ast$ be the Legendre transform of $u$, namely
\begin{equation}
\label{eq:u ast}
u^\ast(q):=\sup_{x \in \R^n}\{q\cdot x-u(x)\}
\end{equation}
and define
\[
\Omega := \partial u^\ast(\Upsilon) \supset U.
\]
Recalling that $\partial u$ and $\partial u^\ast$ are inverses of each other (see, e.g., \cite[Section A.4.2]{F}), 
we deduce that $(\nabla u)_{\#} \rho = \gamma$ where
\[
\rho := \mu \mres \Omega\quad \text{and} \quad \gamma := \nu \mres \Upsilon.
\]
In particular, if we let $\phi : \R^n \to \R \cup \{ + \infty \}$ be defined by
\[
\phi(x) := \sup_{\substack{z \in \Omega \\ p \in \partial u(z)}} \{ u(z) + p \cdot (x-z) \},
\]
then, by construction, $\phi$ and $u$ agree on $\Omega$, $\phi$ is (globally) Lipschitz,
\[
\partial \phi(\R^n) = \partial \phi (\Omega) = \Upsilon,
\] 
and $0 \in \Omega$ is an exposed point for $\{ \phi = 0 \} = \{ \phi \leq 0 \}$.

Now, let $ \phi_\epsilon(x) := \phi(x) - \epsilon(x_1 + 1)$ and define
\[
S_0 := \{ \phi = 0 \} \cap \{ x_1 \geq -1 \} \quad \text{and}\quad S_\epsilon := \{ \phi_\epsilon \leq 0 \}.
\]
Also, let $\gamma_\epsilon$ be defined by
\[
\gamma_\epsilon :=   (\Id - \epsilon \e_1)_{\#} \gamma .
\]
Notice that, by construction, $S_0$ is compact,  $S_0\subset\{x_1\leq 0\}$, $0,-\e_1 \in S_0$, and $S_\epsilon \to S_0$ in the Hausdorff sense as $\epsilon \to 0$; in particular,
there exists $D>0$ such that
\[
S_\epsilon \subset B_D\quad \text{for all}\quad \epsilon \ll 1.
\]
Also, if $a_\epsilon > 0$ is such that $\Pi_\epsilon := \{ x_1 = a_\epsilon \}$ is a supporting plane to $S_\epsilon$, we see that 
\[
a_\epsilon \to 0 \quad\text{as}\quad \epsilon \to 0
\]
and
\[
\epsilon = |\phi_\epsilon(0)| \leq \max_{S_\epsilon} |\phi_\epsilon| \leq (1 + a_\epsilon) \epsilon.
\]

Let $A_\epsilon$ be the John transformation (affine map) that normalizes $S_\epsilon$ (see, e.g., \cite{Gu}): 
\[
A_\epsilon x := L_\epsilon(x - x_\epsilon),
\]
where $x_\epsilon$ is the center of mass of $S_\epsilon$ and $L_\epsilon:\R^n\to\R^n$ is a symmetric and positive definite linear transformation.
Set
\[
\tilde{\phi}_\epsilon(x) := \frac{\phi_\epsilon(A_\epsilon^{-1}x)}{\epsilon} \quad\text{and}\quad \tilde{S}_\epsilon := A_\epsilon(S_\epsilon).
\]
Then
\[
B_1 \subset \tilde{S}_\epsilon \subset B_{n^{3/2}}
\]
and
\[
1 = |\tilde{\phi}_\epsilon(\tilde{0}_\epsilon)| \leq \max_{\tilde{S}_\epsilon} |\tilde{\phi}_\epsilon| \leq 1 + a_\epsilon \quad \text{with}\quad \tilde{0}_\epsilon := A_\epsilon(0).
\]
Recall that affine transformations preserve the ratio of the distances between parallel planes; therefore, letting $\Pi_{-1} := \{ x_1 = -1 \}$, $\Pi_0 := \{ x_1 = 0 \}$, and $\tilde{\Pi}_i := A_\epsilon(\Pi_i)$ for $i = -1, 0, \epsilon$, we have that
\[
\frac{\dist(\tilde{\Pi}_0,\tilde{\Pi}_\epsilon)}{\dist(\tilde{\Pi}_{-1},\tilde{\Pi}_\epsilon)} =  \frac{\dist(\Pi_0,\Pi_\epsilon)}{\dist(\Pi_{-1},\Pi_\epsilon)} = \frac{a_\epsilon}{1+a_\epsilon}.
\]
In turn,
\[
\dist(\tilde{0}_\epsilon, \partial \tilde{S}_\epsilon) \leq \dist(\tilde{\Pi}_0,\tilde{\Pi}_\epsilon) \leq  \dist(\tilde{\Pi}_{-1},\tilde{\Pi}_\epsilon)\frac{a_\epsilon}{1+a_\epsilon} \leq \diam(\tilde{S}_\epsilon)a_\epsilon \leq 2n^{3/2}a_\epsilon,
\]
and considering the cone generated by $\partial \tilde{S}_\epsilon$ over $(\tilde{0}_\epsilon,\tilde{\phi}_\epsilon(\tilde{0}_\epsilon))$, we find that 
\[
K_\epsilon := \conv \left( B_{r_n} \cup \left\{ \tfrac{r_n}{a_\epsilon}\e_1 \right\} \right) \subset \partial \tilde{\phi}_\epsilon(\tilde{S}_\epsilon) \quad \text{with}\quad r_n := \frac{1}{2n^{3/2}}.
\]
(For more details on this inclusion, see, e.g., \cite[Theorem 2.8]{F}.)
So if we let 
\[
\tilde{\rho}_\epsilon := (A_\epsilon)_{\#} \rho \quad \text{and}\quad \tilde{\gamma}_\epsilon := (\epsilon^{-1}L_\epsilon^{-1})_{\#}\gamma_\epsilon,
\]
then $(\nabla \tilde{\phi}_\epsilon)_\# \tilde{\rho}_\epsilon  = \tilde{\gamma}_\epsilon$, and, by the mass balance formula,
\begin{equation}
\label{eqn: mb1}
\tilde{\gamma}_\epsilon (K_\epsilon)  \leq \tilde{\gamma}_\epsilon\big(\partial \tilde{\phi}_\epsilon(\tilde{S}_\epsilon)\big) = \tilde{\rho}_\epsilon (\tilde{S}_\epsilon)\leq \tilde{\rho}_\epsilon(B_{n^{3/2}}).
\end{equation}
On the other hand, since $S_\epsilon \subset B_D$ and $\tilde  S_\epsilon \supset B_1$ for $\epsilon \ll 1$, we see that 
\[
|A_\epsilon(w) - A_\epsilon(z)| \geq \frac{1}{D}|w - z| \quad \text{for all}\quad w,z \in \R^n.
\]
In turn, for $\epsilon \ll 1$,
\[
\tilde{\Omega}_\epsilon := A_\epsilon(\Omega) \supset A_\epsilon(B_d(-\e_1)) \supset B_{\frac{d}{D}}(A_\epsilon(-\e_1)).
\]
Therefore, if we define
\[
\tilde{S}_{\epsilon,d} := \left\{ \dist(\,\cdot\,,\partial \tilde{S}_\epsilon) \geq \tfrac{d}{2D} \right\},
\] 
then there exists a dimensional constant $C_n > 0$  and a point $\tilde{z}_d$ such that
\[
A_\epsilon(B_d(-\e_1)) \cap \tilde{S}_{\epsilon,d} \supset B_{\frac{d}{C_nD}}(\tilde{z}_d).
\]
Also, by, for example, \cite[Corollary A.23]{F},
\[
\partial \tilde{\phi}_\epsilon (\tilde{S}_{\epsilon,d}) \subset B_{\frac{6D}{d}} .
\]
Thus, for all $\epsilon \ll 1$,
\begin{equation}
\label{eqn: mb2}
\begin{split}
\tilde{\rho}_\epsilon (B_{n^{3/2}})  \leq \tilde{\mu}_\epsilon (B_{2n^{3/2}}(\tilde{z}_d))
\leq C_\mu^k \tilde{\mu}_\epsilon\left(B_{\frac{d}{C_n D}}(\tilde{z}_d)\right) = C_\mu^k \tilde{\rho}_\epsilon\left(B_{\frac{d}{C_n D}}(\tilde{z}_d)\right),
\end{split}
\end{equation}
where $\tilde{\mu}_\epsilon := (A_\epsilon)_{\#}\mu$, the number $k \in \N$ is such that $2n^{3/2} \leq 2^k \frac{d}{C_nD}$, and $C_\mu$ is the doubling constant for $\mu$ in $B_{4Dn^{3/2}}$.
(The last equality holds since $\tilde{\mu}_\epsilon$ and $\tilde{\rho}_\epsilon$ agree on $\tilde{\Omega}_\epsilon$.)
Moreover, using the mass balance formula again, we deduce that
\begin{equation}
\label{eqn: mb3}
\begin{split}
\tilde{\rho}_\epsilon\left(B_{\frac{d}{C_n D}}(\tilde{z}_d)\right)  &\leq \tilde{\rho}_\epsilon\left(A_\epsilon(B_d(-\e_1)) \cap \tilde{S}_{\epsilon,d}\right) 
\\
&= \tilde{\gamma}_\epsilon\left(\partial \tilde{\phi}_\epsilon (A_\epsilon(B_d(-\e_1)) \cap \tilde{S}_{\epsilon,d})\right) \leq \tilde{\gamma}_\epsilon\big(B_{\frac{6D}{d}}\big)
\end{split}
\end{equation}
for all $\epsilon \ll 1$.
Consequently, combining the three chains of inequalities \eqref{eqn: mb1}, \eqref{eqn: mb2}, and \eqref{eqn: mb3}, we have that
\begin{equation}
\label{eqn: mb4}
\tilde{\gamma}_\epsilon(K_\epsilon) 
\leq C_\mu^k\tilde{\gamma}_\epsilon\big(B_{\frac{6D}{d}}\big).
\end{equation}
Now, let $t_m \e_1 \in K_\epsilon$ for $m = 1, \dots, M$ be a sequence of points chosen\footnote{\,A possible way to construct such a sequence is to choose $t_m=5^m$. 
To ensure that $t_m \e_1 \in K_\epsilon$ for any $m=1,\ldots,M$, one needs $M \leq \frac{\log r_n - \log a_\epsilon}{\log 5}$.} so that 
\[
\frac{1}{2}K_m \subset K_m \setminus K_{m-1} \quad \text{with}\quad K_m := \conv ( B_{r_n}  \cup  \{t_m\e_1\} ) \quad\text{and}\quad K_0 := B_{r_n}. 
\]
By construction, $\{ \frac{1}{2}K_m \}_{m = 1}^M$ is a disjoint family, and
\begin{equation}
\label{eq:M}
M = M(a_\epsilon) \to \infty\quad \text{as}\quad a_\epsilon \to 0.
\end{equation}
Hence, since $\epsilon L_\epsilon(K_\epsilon) \subset \Upsilon$, we find that
\[
M\tilde{\gamma}_\epsilon(B_{r_n}) \leq \sum_{m=1}^M \tilde{\gamma}_\epsilon(K_m) \leq C_\gamma \sum_{m=1}^M \tilde{\gamma}_\epsilon(\tfrac{1}{2} K_m) \leq C_\gamma \tilde{\gamma}_\epsilon(K_\epsilon),
\]
with $C_\gamma$ denoting the doubling constant for $\gamma$ in $B_{2R}$, which is the same as the doubling constant for $\nu$ in $B_{2R}$; since $\Upsilon$ is convex, $\gamma$ inherits its doubling property from $\nu$.
All in all, considering the above chain of inequalities and \eqref{eqn: mb4}, and denoting by $j\in \mathbb N$ the smallest number such that $\frac{6D}{d} \leq 2^j r_n$, we see that
\[
0 < M\tilde{\gamma}_\epsilon(B_{r_n}) \leq C_\mu^kC_\gamma \tilde{\gamma}_\epsilon\big(B_{\frac{6D}{d}}\big)
\leq C_\mu^kC_\gamma^{j+1}\tilde{\gamma}_\epsilon(B_{r_n}),
\]
or, equivalently,
\[
M \leq C_\mu^kC_\gamma^{j+1}.
\]
But this is impossible for small $\epsilon$, concluding the proof.

\subsection*{Case 2b: $0 \in \partial (\dom(u)) \cap \overline{X}$.}
In this subcase, let $u_\epsilon(x) := u(x) - \epsilon(x_1 + 1)$ and define 
\[
S_0 := \Sigma \cap \{ x_1 \geq -1 \} \quad \text{and} \quad S_\epsilon := \{ u_\epsilon \leq 0 \}.
\]
Like before, for all $\epsilon \ll 1$,
\[
S_\epsilon \subset B_D
\]
for some $D > 0$. Here, however, as $0 \in \partial (\dom(u))$, we have that
\[
\partial \tilde{u}_\epsilon(\tilde{S}_\epsilon) \supset \conv (B_{r_n} \cup \R^+\e_1 ) \quad \text{with}\quad r_n := \frac{1}{2n^{3/2}}.
\]
The function $\tilde{u}_\epsilon$ is defined in an analogous fashion to how $\tilde{\phi}_\epsilon$ was defined in Case 2a (but replacing $\phi$ by $u$) and, again, $\tilde{S}_\epsilon := A_\epsilon(S_\epsilon)$ with $A_\epsilon$ denoting the John map associated to $S_\epsilon$ whose linear part is $L_\epsilon$.
In turn, arguing as we did in Case 2a, where again $k \in \N$ is such that $2n^{3/2} \leq 2^k \frac{d}{C_nD}$ and $C_\mu$ is the doubling constant for $\mu$ in $B_{4Dn^{3/2}}$, but in the original variables, we deduce that
\[
\nu_\epsilon\big(\epsilon L_\epsilon (K_\epsilon)\big) \leq C_\mu^k \nu_\epsilon\big(\epsilon L_\epsilon (B_{\frac{6D}{d}} )\big)
\]
for all $\epsilon \ll 1$ (cf. \eqref{eqn: mb4}).
Here, instead, $\nu_\epsilon := (\Id - \epsilon \e_1)_{\#} \nu$ and
\[
K_\epsilon := \conv \left( B_{r_n} \cup \left\{ \tfrac{r_n}{\epsilon}\e_1 \right\} \right).
\]
Now notice that 
\[
|L_\epsilon(\e_1)| = |A_\epsilon(0) - A_\epsilon(-\e_1)| \leq 2n^{3/2}.
\] 
Moreover, we claim there exists an $N \gg 2n^{3/2} > 0$ such that 
\[
\|\epsilon L_\epsilon\| \leq N \quad \text{for all}\quad \epsilon \ll 1. 
\]
Indeed, if not, then we can find a sequence of points $z_\epsilon \in S_\epsilon$ and slopes $p_\epsilon \in \partial u_\epsilon(z_\epsilon) \cap \Span (S_0)^\perp$ such that $|p_\epsilon| \to \infty$.
In particular, in the limit, we find a point $z_0 \in S_0$ such that $\partial u(z_0) \cap \Span (S_0)^\perp$ contains a sequence of slopes $\{p_j\}_{j \in \N}$ with $|p_j| = j$.
But as $p_j \in \Span (S_0)^\perp$, we see that $p_j \cdot (x - z_0) =  p_j \cdot x  =   p_j \cdot (x - z)$ for any $z \in S_0$.
Hence, $p_j \in \partial u(z)$ for all $z \in S_0$ and $j \in \N$.
However, this is impossible; $S_0 \cap \interior(\dom(u))$ is nonempty, and on this set, $u$ is locally Lipschitz, proving the claim.

Therefore, 
\[
\epsilon L_\epsilon(K_\epsilon) \subset B_N \quad \text{and}\quad \epsilon L_\epsilon \big(B_{\frac{6D}{d}}\big) \subset B_{\frac{6DN}{d}}.
\]
And so, arguing exactly like we did in Case 2a, we find that
\[
M \leq C_\mu^kC_\nu^{j+1},
\]
where $C_\nu$ is the doubling constant for $\nu$ in $B_{6DN/d}$ and $M = M(\epsilon) \to \infty$ as $\epsilon \to 0$ is the analogous count for this case's $K_\epsilon$ (cf. \eqref{eq:M}).
But, again, this is impossible.

\subsection*{Case 3}{\bf $\Sigma$ has an exposed point $\hat{x}$ in $\R^n \setminus \overline{X}$.}
In this case, up to a translation, a dilation, a rotation, and subtracting  $\ell$ from $u$, we can assume that
\[
\hat{x} = 0,\quad \Sigma \subset \{ x_1 \leq 0 \},\quad \ell \equiv 0,\quad \text{and}\quad S_0 := \Sigma \cap \{ x_1 \geq - 1 \} \Subset \R^n \setminus \overline{X}.
\]
Like before, let $u_\epsilon(x) := u(x) - \epsilon(x_1 + 1)$ and define
\[
S_\epsilon := \{ u_\epsilon \leq 0 \} \quad \text{and}\quad \nu_\epsilon := (\Id - \epsilon \e_1)_{\#} \nu.
\]
Again, $S_\epsilon \to S_0$ as $\epsilon \to 0$, so
\[
\diam(S_\epsilon) \leq 2\diam(S_0) \quad\text{and}\quad S_\epsilon \Subset \R^n \setminus \overline{X}
\] 
for all $\epsilon \ll 1$.
For these small positive $\epsilon$, then,
\[
0 = \mu(S_\epsilon) = \nu_\epsilon(\partial u_\epsilon(S_\epsilon)),
\]
where the second equality follows from the mass balance formula.
(Recall that $\mu$ vanishes on $\R^n \setminus \overline{X}$.)
Moreover, as $Y$ is convex,  
\[
\partial u_\epsilon(S_\epsilon) \subset \spt (\nu_\epsilon) = \overline{Y} - \epsilon\e_1
\] 
(cf. \eqref{eqn: subdiff props}).
Thus, any open subset of $\partial u_\epsilon(S_\epsilon)$ must be in the interior of the support of $\nu_\epsilon$.
In turn, considering the cone generated by $\partial S_\epsilon$ over $(0,u_\epsilon(0))$, for $0 < \epsilon \ll 1$, we find that
\[
0 = \nu_\epsilon(\partial u_\epsilon(S_\epsilon))  \geq \nu_\epsilon(B_{r_\epsilon})> 0 \quad\text{with}\quad r_\epsilon := \frac{|u_\epsilon(0)|}{2\diam(S_0)}.
\]
(Again, for more details on this inclusion, see, e.g., \cite[Theorem 2.8]{F}.)
This is a contradiction and concludes the proof.
\end{proof}

\section{Proof of Theorem~\ref{thm: C1}}
\label{sec: C1}
Again, we replace $u$ by its lower-semicontinuous extension outside of $X$, exactly as we did at the beginning of Section~\ref{sec: strict convexity}.
We split the proof in three parts.

\subsection*{Part 1: $u$ is continuously differentiable inside $X$.}
We follow the argument used to prove \cite[Corollary 1]{C0}.
Assume for the sake of a contradiction that the result is false.
Up to a translation, let $0 \in X$ be a point at which $u$ has two distinct supporting planes.
After a rotation, dilation, and subtracting off an affine function from $u$, we may assume that
\[
u(x) \geq \max \{ x_1,0 \},\quad u(0) = 0, \quad\text{and}\quad\frac{u(-s\e_1)}{s} \to 0 \quad\text{as}\quad s \to 0.
\]
Now consider the function $u_\sigma$ defined by
\[
u_\sigma(x) := u(x) - \tau( x_1 + 2\sigma) \quad\text{with}\quad \tau := \frac{u(-\sigma\e_1 )}{\sigma}.
\]
Note that $\tau \to 0$ as $\sigma \to 0$.
If
\[
S_\sigma := \{ u_\sigma \leq 0 \},
\]
then, by the strict convexity of $u$  provided by Theorem~\ref{thm: strict convexity}, we see that
\[
S_\sigma \subset B_D \Subset X \cap \interior (\dom(u))
\]
for some $D > 0$ and for all $\sigma \ll 1$; also, for these small positive $\sigma$,
\[
\partial u_\sigma (S_\sigma) \subset B_R \cap Y
\]
for some $R > 0$.
Moreover, if $\Pi_{-a} := \{ x_1 = -a \}$ and $\Pi_{b} := \{ x_1 = b \}$ denote the two parallel planes that tangentially sandwich $S_\sigma$, we see that
\[
a > \sigma\quad  \text{and}\quad b < \frac{2\tau \sigma}{1 - \tau},
\]
provided $\sigma > 0$ is small enough to guarantee that $\tau < 1$.
Furthermore,
\[
\max_{S_\sigma} |u_\sigma| = |u_\sigma(0)| = 2\tau \sigma,
\]
and
\[
\frac{\dist(\Pi_b,\Pi_0)}{\dist(\Pi_{-a},\Pi_b)} = \frac{b}{a+b} \leq \frac{b}{a} \leq \frac{2\tau}{1 - \tau} \to 0 \quad\text{as}\quad \sigma \to 0,
\]
where $\Pi_0 := \{ x_1 = 0\}$.

Now, set 
\[
\tilde{u}_\sigma(x) := \frac{u_\sigma(A_\sigma^{-1}x)}{2\tau\sigma} \quad \text{and}\quad \tilde{S}_\sigma : = A_\sigma(S_\sigma),
\] 
where $A_\sigma$ is the John map that normalizes $S_\sigma$.
Arguing as we did in the proof of Theorem~\ref{thm: strict convexity}, we find the same contradiction as we did in Case 2 when $\tau$ sufficiently small; the only difference is that we consider a slightly different chain of inequalities:
\[
\begin{split}
\tilde{\nu}_\sigma(K_\sigma) \leq \tilde{\nu}_\sigma\big(\partial \tilde{u}_\sigma(\tilde{S}_\sigma)\big) &= \tilde{\mu}_\sigma(\tilde{S}_\sigma) 
\\
&\leq C_\mu \tilde{\mu}_\sigma(\tfrac{1}{2}\tilde{S}_\sigma) = C_\mu\tilde{\nu}_\sigma\big(\partial \tilde{u}_\sigma(\tfrac{1}{2}\tilde{S}_\sigma)\big) \leq C_\mu\tilde{\nu}_\sigma(B_{\frac{1}{r_n}})
\end{split}
\]
(cf. \eqref{eqn: mb4}) where $\tilde{\mu}_\sigma$ and $\tilde{\nu}_\sigma$ are defined so that $(\nabla \tilde{u}_\sigma)_{\#} \tilde{\mu}_\sigma = \tilde{\nu}_\sigma$ and
\[
K_\sigma := \conv\left(B_{r_n} \cup \left\{ \tfrac{r_n+n(1-\tau)}{2\tau}\e_1\right\}\right) \quad \text{with}\quad r_n := \frac{1}{2n^{3/2}}.
\]
This proves that $u$ is differentiable.

By \cite[Lemma A.24]{F}, for example, we know that differentiable convex functions are continuously differentiable.
So we conclude that $u$ is continuously differentiable in $X$.

\subsection*{Part 2: $\nabla u(X)=Y$ when $X$ is convex.}
Because $\nabla u$ is continuous in $X$, its image $Y':=\nabla u(X)$ is an open set of full $\nu$-measure contained inside $Y$.
Also, as the assumptions on $\mu$ and $\nu$ are symmetric, the optimal transport map $\nabla v$ from $\nu$ to $\mu$ is continuous, and $X':=\nabla v(Y)$ is an open set of full $\mu$-measure contained inside $X$. 
Hence, recalling that $\nabla u$ and $\nabla v$ are inverses of each other (see, e.g., \cite[Corollary 2.5.13]{FG}), we conclude that $X'=X$ and $Y'=Y$, as desired.

\subsection*{Part 3: $\nabla u$ is locally H\"older continuous inside $X$.}
Thanks to the strict convexity and $C^1$ regularity of $u$, we can localize the arguments of the proof of \cite[Theorem 1.1]{JS} to obtain the local H\"older continuity of $u$ inside $X$.

More precisely, if $u^\ast$ denotes the Legendre transform of $u$ (see \eqref{eq:u ast}), as in \cite{JS}, one can show that $u^\ast$ satisfies a weak form of Alexandrov's Maximum principle (see \cite[Lemma 3.2]{JS}), from which one deduces the engulfing property for the sections of $u^\ast$ (see \cite[Lemma 3.3]{JS}). 
Iteratively applying this engulfing property, one obtains a  polynomial strict convexity bound for $u^\ast$.
This bound implies the local H\"older continuity of $u$ inside $X$ (see \cite[Proof of Theorem 1.1]{JS}).
We leave the details of this adaptation to the interested reader.

\bigskip
\noindent {\bf Conflict of Interest:} Authors state no conflict of interest.
\bigskip
\\
\noindent {\bf Funding Information:} AF acknowledges the support of the ERC grant No. 721675 ``Regularity and Stability in Partial Differential Equations (RSPDE)'' and of the Lagrange Mathematics and Computation Research Center. YJ was supported in part by NSF grant DMS-1954363.


\end{document}